\newtheorem{theorem}{Theorem}
\newtheorem{lemma}[theorem]{Lemma}
\newtheorem{corollary}[theorem]{Corollary}
\newtheorem{definition}{Definition}
\newtheorem{conjecture}{Conjecture}
\begin{document}
	
\title{A diagrammatic approach to the three-page index}
	
\author[H. Yoo]{Hyungkee Yoo}
\address{Department of Mathematics Education, Sunchon National University, Sunchon 57922, Korea}
\email{hyungkee@scnu.sc.kr}
\keywords{three-page index, link diagram, non-separating independent set}
	
\begin{abstract}
The three-page index $\alpha_3(L)$ is an invariant that measures the complexity
of representing a link $L$ in a three-page book.
It is known that $\alpha_3(L)$ admits a linear upper bound in terms of the crossing number,
with equality realized by the Hopf link.

In this paper, we investigate the equality case of this bound from a diagrammatic viewpoint.
Starting from a reduced link diagram, we construct three-page presentations
via binding circles arising as boundaries of suitable contractible subcomplexes
of the induced cell decomposition of the $2$-sphere.
This approach allows a refined control of the number of arcs
in the resulting three-page presentation.

As a consequence, we prove that for any non-split, nontrivial link $L$
other than the Hopf link,
\[
\alpha_3(L)\le 3c(L)-1,
\]
and hence characterize completely the links for which
$\alpha_3(L)=3c(L)$.
\end{abstract}

\maketitle
\section{Introduction} \label{sec:intro}

Structured embeddings of links in $\mathbb{R}^3$ provide a combinatorial way to study topological complexity.
One influential approach is the arc presentation introduced by Cromwell~\cite{Cr}, in which a link $L$ is embedded into finitely many pages of an open book decomposition of $\mathbb{R}^3$.
Here the open book decomposition consists of a \textit{binding axis} and \textit{pages}, and each page contains exactly one arc.
The minimal number of pages required for such a representation, known as the \textit{arc index} $\alpha(L)$, has become a fundamental invariant closely related to geometric and combinatorial aspects of links~\cite{BP,Cr2,CrNut,Mat,MB,Ng}.

A different perspective was later proposed by Dynnikov~\cite{Dyn}, who introduced \textit{three-page presentations} of links by restricting the ambient space to three pages of an open book decomposition and allowing each page to contain several mutually disjoint arcs.
Dynnikov proved that every link admits a three-page presentation, and subsequent work has investigated such presentations for various classes of links and spatial graphs~\cite{Dyn2, Dyn3, JLY, Kur, Kur2, KV, Y}.
As with the arc index, the \textit{three-page index} $\alpha_3(L)$, introduced in~\cite{JLY}, is defined as the minimal number of arcs required to represent a link in a three-page presentation.

\begin{figure}[h!]
	\includegraphics{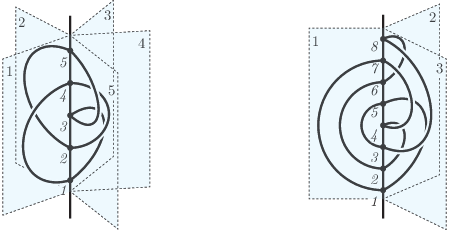}
	\caption{An arc presentation and a three-page presentation of the trefoil knot}
	\label{fig:arc}
\end{figure}

A \textit{link projection} is a regular embedding into $\mathbb{R}^2$, and a \textit{link diagram} is obtained by assigning over–under information at each crossing.
The \textit{crossing number} $c(L)$ of a link $L$ is defined as the minimal number of crossings among all link diagrams representing $L$.
It is one of the most fundamental invariants in link theory, as it provides a basic measure of diagrammatic complexity and serves as a reference scale for many other link invariants.
In particular, numerous geometric and combinatorial invariants are studied in relation to the crossing number, often through upper or lower bounds expressed in terms of $c(L)$.

According to Bae and Park~\cite{BP}, the arc index admits a linear upper bound in terms of the crossing number.
In contrast, for three-page presentations, linear upper bounds have been established only for certain classes of links,
including torus links, rational links, and pretzel links~\cite{JLY, Y}.
It was shown in a previous work~\cite{Y2} of the author that the three-page index
admits a linear upper bound in terms of the crossing number, namely
$$
\alpha_3(L)\le 3c(L).
$$
The sharpness of the inequality is demonstrated by the Hopf link $2_1^2$,
which satisfies $\alpha_3(2_1^2)=6$~\cite{JLY}.
This value is forced by the general lower bound
$\alpha_3(L)\ge 3\,br(L)$,
where $br(L)$ denotes the bridge number.
Since the Hopf link is a two-bridge link, equality follows.

The purpose of the present paper is to sharpen this bound by investigating
the equality case.
We prove that equality is attained only for split unions of Hopf links,
and that every other nontrivial link satisfies a strict inequality.

\begin{theorem}\label{thm:main}
If $L$ is a non-split, nontrivial link other than the Hopf link, then
$$
\alpha_3(L)\le 3c(L)-1.
$$
\end{theorem}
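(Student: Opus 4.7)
The plan is to start from a reduced minimal-crossing diagram $D$ of $L$ --- which is connected because $L$ is non-split --- and apply the construction outlined in the introduction to the cell decomposition $\Sigma$ of $S^2$ induced by $D$. This construction assigns, to each contractible subcomplex $X\subseteq\Sigma$, a three-page presentation whose binding circle is $\partial X$ and whose arc count $a(X)$ depends on the combinatorial relationship between $X$ and $D$. The earlier bound $\alpha_3(L)\le 3c(L)$ arises from the most naive choice of $X$, so the goal is to exhibit, for every non-Hopf reduced diagram, an alternative $X$ satisfying $a(X)\le 3c(L)-1$.

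The first step is to decompose $a(X)$ into local contributions: each crossing of $D$ contributes three arcs in the baseline configuration, and each edge of $D$ either absorbed into $X$ or crossed by $\partial X$ modifies this count in a controlled way. I expect the saving of a single arc to come from a \emph{local absorbing move}, in which $X$ is enlarged across a shared edge of two adjacent faces, merging two page-arcs into one. The proof then reduces to the combinatorial claim that every reduced diagram, other than the standard two-crossing diagram of the Hopf link, admits at least one edge across which such a move is available. Using Euler's formula $V-E+F=2$, which gives $F=c(L)+2$ regions in $D$, together with the absence of nugatory crossings, a face-degree count should produce either a face of degree $\ge 3$ or a bigon with a non-bigon neighbour whenever $c(L)\ge 3$, in either case furnishing an absorbable edge.

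The main obstacle will be verifying that the absorbing move genuinely decreases $a(X)$ by one \emph{globally}, rather than just locally: extending $X$ could in principle inflate arcs elsewhere. I would tackle this by establishing an invariant-style identity expressing $a(X)$ in terms of the Euler data of $X$ and the way $\partial X$ cuts the link projection, so that the net change of $-1$ when $X$ absorbs the selected edge can be read off directly. The Hopf link then appears as the unique exception to strict inequality: its only reduced diagram has $c=2$ and consists of two bigons meeting at both crossings, leaving no absorbable edge, and matching the known equality $\alpha_3(2_1^2)=6$.
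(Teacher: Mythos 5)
Your overall framework is the paper's: a contractible subcomplex $X$ of the cell decomposition induced by a reduced diagram, with $\partial\mathcal N(X)$ as binding circle and an arc count that drops as $X$ absorbs more of the diagram. But there is a genuine gap in the accounting and in the combinatorial claim you reduce to. The naive contractible choice (a spanning tree of the underlying $4$-valent graph) already costs $3n+1$ arcs, not $3n$: of the $4n$ half-edges at the crossings only the $n-1$ tree edges are absorbed. The known bound $3c(L)$ therefore already uses one saving, and to reach $3c(L)-1$ you need \emph{two} independent savings. Your plan to exhibit ``at least one edge across which such a move is available'' lands you back at $3n$, which is the bound you are trying to beat. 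In the paper each saving comes from adjoining a $2$-cell to the spanning tree, and the two chosen $2$-cells must not share a $1$-cell (and the enlarged subcomplex must stay contractible); the crux is producing \emph{two} such faces. The paper does this by taking a path $e_1,e_2$ of length two in the tree (possible since $n\ge 3$) and showing, via a checkerboard/cut-vertex argument, that if the two faces of $e_1$ coincided with the two faces of $e_2$ the common crossing would be nugatory, contradicting reducedness. Your proposed substitute --- an Euler-formula face-degree count yielding ``a face of degree $\ge 3$ or a bigon with a non-bigon neighbour'' --- does not engage with the actual obstruction, which is the edge-adjacency of the chosen faces and the contractibility of the enlarged complex, not the degrees of individual faces.

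A secondary but telling symptom: your description of the Hopf exception is wrong. The standard Hopf diagram has four bigon regions, and it \emph{does} contain two regions sharing no edge (the central lens and the outer region), so ``no absorbable edge'' is not what fails there; what fails is that adjoining both of those faces to the one-edge spanning tree produces a non-contractible subcomplex (its first Betti number is $1$). Since your dichotomy must exclude exactly the Hopf diagram and nothing else, getting the reason for the exclusion wrong indicates that the criterion you propose to verify is not the right one. To repair the proposal you would need (i) to restate the target as finding two $2$-cells, attached along the tree, that pairwise share no $1$-cell and keep the subcomplex contractible, and (ii) to derive their existence from reducedness (absence of nugatory crossings) rather than from a face-degree average.
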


Theorem~\ref{thm:main} shows that, apart from the Hopf link,
the linear bound $\alpha_3(L)\le 3c(L)$ can always be improved by at least one.
Together with the fact that the Hopf link realizes equality
and that both the three-page index and the crossing number
are additive under split union,
this leads to a complete characterization of the equality case.

\begin{corollary}
Let $L$ be a link with no trivial split components.
Then $\alpha_3(L) = 3c(L)$ if and only if $L$ is a split union of Hopf links.
\end{corollary}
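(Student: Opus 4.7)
The plan is a direct reduction to Theorem~\ref{thm:main}, using two background facts already recorded in the introduction: the universal bound $\alpha_3(L)\le 3c(L)$ and the additivity of both $\alpha_3$ and $c$ under split union. The anchor for the argument is the Hopf link itself, for which $\alpha_3(2_1^2)=6=3c(2_1^2)$.

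For the \emph{if} direction, I will assume $L$ is a split union of $k$ Hopf links and apply additivity of both invariants. This immediately gives $\alpha_3(L)=6k$ and $c(L)=2k$, so that $\alpha_3(L)=3c(L)$.

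For the \emph{only if} direction, I will decompose $L$ into its non-split summands $L=L_1\sqcup\cdots\sqcup L_k$. Since $L$ has no trivial split components, each $L_i$ is both non-split and nontrivial. Additivity then gives
\[
\sum_{i=1}^{k}\alpha_3(L_i) \;=\; \alpha_3(L) \;=\; 3c(L) \;=\; 3\sum_{i=1}^{k} c(L_i),
\]
while the termwise universal bound $\alpha_3(L_i)\le 3c(L_i)$ forces equality in every summand, i.e.\ $\alpha_3(L_i)=3c(L_i)$ for every $i$. Theorem~\ref{thm:main} then rules out every possibility for $L_i$ other than the Hopf link, which completes the characterization.

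I do not expect a substantive obstacle here: the whole content sits inside Theorem~\ref{thm:main}, and this corollary is bookkeeping on top of additivity. The only point that should be stated carefully is that the decomposition into non-split summands is well-defined up to isotopy, which is classical and is precisely what makes the invariant identities used above meaningful.
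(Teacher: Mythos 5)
Your proposal is correct and matches the paper's intended argument: the paper derives this corollary exactly from additivity of $\alpha_3$ and $c$ under split union, the universal bound $\alpha_3(L)\le 3c(L)$, the equality $\alpha_3(2_1^2)=6=3c(2_1^2)$ for the Hopf link, and Theorem~\ref{thm:main} applied to each non-split summand. No substantive difference from the paper's route.
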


The upper bound in Theorem~\ref{thm:main} is expected to be sharp.
In particular, it is conjectured that the three-page index of the trefoil knot
and the figure-eight knot are $8$ and $11$, respectively.
At present, however, we do not have a complete proof of these values.

The proof of Theorem~\ref{thm:main} is constructive.
Starting from a reduced link diagram,
we build a three-page presentation by introducing a suitable binding circle.
Viewing the diagram as a cellular decomposition of the $2$-sphere,
we refine this construction using a contractible subcomplex of the associated
cell complex.
This approach allows us to control the number of arcs appearing
in the induced three-page presentation and leads to the stated bound.

The paper is organized as follows.
In Section~\ref{sec:circular}, we describe a uniform method for constructing
three-page presentations from link diagrams.
In Section~\ref{sec:cell_cpx}, we reinterpret this construction using
cell complexes and prove Theorem~\ref{thm:main}.
Finally, in Section~\ref{sec:graph}, we present a graph-theoretic interpretation
of the construction and discuss possible directions for further improvements.

\section{Circular three-page presentation} \label{sec:circular}

We consider the one-point compactification of $\mathbb{R}^3$ equipped with an open-book structure.
Under this compactification, the binding axis becomes an unknotted circle, and each half-plane becomes a disk.
This circle is called the \textit{binding circle}.
In this setting, the notion of a \textit{circular three-page presentation} was introduced in~\cite{Y}.
The \textit{binding index}, which was originally represented by the vertical height along the binding axis, is now assigned sequentially along the binding circle.

A circular three-page presentation admits a planar description in terms of a link diagram.
To this end, we fix a simple closed curve in the plane and regard it as the binding circle.
By placing the first and second pages inside the binding circle and the third page outside, all crossings can be arranged in the interior region.
In this configuration, strands on the first page appear as under-crossings, while strands on the second page correspond to over-crossings.
With this convention, the page order is fixed, and the binding index is assigned according to the clockwise order along the binding circle, as shown in Figure~\ref{fig:three}.
Conversely, given a simple closed curve presented in this form, one can recover a corresponding three-page presentation.

\begin{figure}[h!]
	\includegraphics{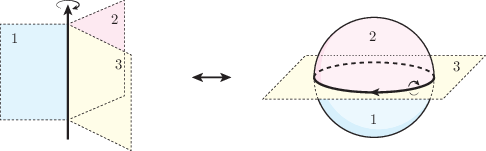}
	\caption{A three-page presentation and its circular three-page presentation with an ordering of the pages}
	\label{fig:three}
\end{figure}

This planar description can be rigorously formalized using the following definition introduced in~\cite{Y2}.

\begin{definition} \label{def:binding-circle}
Let $D$ be a link diagram.
A simple closed curve $\gamma$ is called a \textbf{binding circle for a three-page presentation} of $D$ if it satisfies the following conditions:
\begin{enumerate}
    \item The curve $\gamma$ intersects $D$ in finitely many points.
    \item All crossings of $D$ lie inside $\gamma$.
    \item Each arc of $D$ cut by $\gamma$ is of one of the following three types:
    \begin{enumerate}
        \item it lies outside $\gamma$;
        \item it lies inside $\gamma$ and participates only in over-crossings;
        \item it lies inside $\gamma$ and participates only in under-crossings.
    \end{enumerate}
    \item No two arcs of the same type are adjacent along $\gamma$.
\end{enumerate}
\end{definition}

We now show that a binding circle satisfying the above definition can be constructed for any link diagram.
An intersection point of a binding circle $\gamma$ with the diagram $D$ is called a \textit{binding point}.
By construction, the number of binding points is equal to the number of arcs appearing in the associated three-page presentation.

\begin{lemma} \label{lem:binding-circle-existence}
Let $D$ be a link diagram with $n$ crossings.
Then there exists a binding circle $\gamma$ that induces a three-page presentation with at most $3n+1$ arcs.
\end{lemma}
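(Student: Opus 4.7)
The plan is to construct $\gamma$ incrementally, enclosing one crossing at a time via an inductive procedure based on a small base configuration and a finger-extension move. Viewing $D$ as a $4$-valent plane graph with $n$ vertices, I would build a sequence of simple closed curves $\gamma_1, \gamma_2, \ldots, \gamma_n$ where $\gamma_i$ encloses exactly $i$ crossings, satisfies conditions (1)--(4) of Definition~\ref{def:binding-circle} for the sub-configuration it encloses, and has at most $3i + 1$ binding points. Setting $\gamma = \gamma_n$ then yields the claimed bound, since the number of binding points equals the number of arcs in the associated three-page presentation.

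For the base case, I would pick any crossing $c_1$ and take $\gamma_1$ to be a small transverse circle around $c_1$. Such a circle meets $D$ in exactly four points, one on each incident edge, and the four inside arcs (two over-stubs and two under-stubs at $c_1$) automatically alternate in cyclic order around $\gamma_1$, giving a valid binding circle with $4 = 3 \cdot 1 + 1$ binding points. For the inductive step, given $\gamma_i$ I would select an edge $e$ of $D$ exiting $\gamma_i$ at a binding point $p$ and leading to an unenclosed crossing $c_{i+1}$; such $e$ exists when $D$ is connected, and split components can be handled independently. I would then extend $\gamma_i$ by attaching a \emph{finger}: a thin tube running along $e$ and capped off by a small transverse circle around $c_{i+1}$. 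The tube contributes no binding points, the small circle contributes four, the one on $e$ is absorbed into the tube, and the old binding point $p$ disappears into the finger's interior, giving a baseline net change of at most $+2$ binding points.

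The main obstacle is to preserve condition (4), the alternation of arc types along $\gamma$. When the finger is attached, the first binding point encountered on the small circle adjacent to the tube may share its type with the neighbor of $p$ on $\gamma_i$, breaking alternation. In such cases, a local modification introduces an additional binding point of the appropriate type, for example by routing the tube through a nearby edge of $D$ so as to insert a type-(a) binding point at the right place, or by perturbing the tube to pick up an extra pair of binding points that absorb into the alternation pattern. A case analysis based on the strand-type of $e$ at each of its two endpoints, together with the types of the neighbors of $p$ on $\gamma_i$, shows that a valid finger can always be installed within a budget of at most three new binding points per step. Summing over the $n-1$ inductive steps starting from the base case of four binding points gives the total bound $4 + 3(n-1) = 3n + 1$, as required.
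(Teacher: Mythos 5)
Your construction is, at heart, the same as the paper's: growing the curve by fingers along edges toward new crossings is exactly building the boundary of a regular neighborhood of a spanning tree one edge at a time, and your target count $4+3(n-1)=3n+1$ agrees with the paper's $4n-(n-1)$. The problem is in how you allocate the budget. You claim a baseline of $+2$ per step, with the binding point $p$ on $e$ and the small circle's point on $e$ both absorbed so that $e$ ends up uncrossed. But then the arc of $D$ containing $e$ lies entirely inside the new curve and participates in the crossings at \emph{both} endpoints of $e$. If the strand is an over-strand at one endpoint and an under-strand at the other --- which happens for \emph{every} tree edge of an alternating diagram --- this arc violates condition (3)(b)/(c) of Definition~\ref{def:binding-circle}. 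So each finger's core edge must retain one binding point, and the true baseline is $+3$ per step, not $+2$. That consumes the entire per-step budget, leaving nothing for the additional binding points you propose to insert when alternation breaks; as allocated, your accounting does not close.

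The second issue is that the preservation of condition (4) is asserted (``a case analysis \ldots shows'') rather than carried out, and it is the genuinely delicate point of your incremental scheme: attaching a finger at $p$ replaces one binding point by a whole block of new ones, and both seams where the finger rejoins the old circle must mesh with the types of $p$'s former neighbors. The paper avoids this step-by-step bookkeeping entirely by verifying condition (4) globally on the finished curve: for an alternating diagram the types alternate automatically around the boundary of the spanning-tree neighborhood, and for a non-alternating diagram the repair consists of \emph{removing} an intersection near an offending edge, so it never costs binding points. To repair your proof, set the baseline at $+3$ (one point retained on each finger edge) and then show that condition (4) can always be achieved without adding points --- at which stage you have essentially reproduced the paper's argument, and the inductive framing buys you nothing over taking the whole spanning tree at once.
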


\begin{proof}
Let $D$ be a link diagram with $n$ crossings.
By discarding the crossing information, we regard $D$ as a planar $4$-valent graph $\Gamma$ with $n$ vertices.

Assume first that the underlying graph $\Gamma$ is connected, and fix a spanning tree $T$ of $\Gamma$.
We take the boundary of a sufficiently small regular neighborhood of $T$ in $\mathbb{R}^2$.
This boundary is a simple closed curve, which we denote by $\gamma$.
After a small perturbation, we may assume that $\gamma$ intersects each edge of the spanning tree exactly once.

Since $\Gamma$ is $4$-valent, each vertex contributes four incident edges.
The curve $\gamma$ intersects, in a neighborhood of each vertex,
all incident edges that are not contained in $T$ exactly once,
and intersects each edge of $T$ once.
As a spanning tree on $n$ vertices has exactly $n-1$ edges,
the total number of intersection points between $\gamma$ and the diagram $D$ is
$$4n-(n-1)=3n+1.$$

We now verify that $\gamma$ satisfies the defining conditions of a binding circle for a three-page presentation.
The first three conditions follow directly from the construction.
It remains to verify the adjacency condition.

If the diagram $D$ is alternating, then adjacent arcs along $\gamma$ alternate between over-crossings and under-crossings.
Hence, $\gamma$ satisfies the fourth condition of Definition~\ref{def:binding-circle}.
If $D$ is non-alternating, then there exists at least one edge of $\Gamma$ whose endpoints correspond to crossings of the same type.
In this case, we slightly modify $\gamma$ near that edge so that it avoids intersecting it.
The resulting curve satisfies all the defining conditions of a binding circle,
and the number of intersection points is reduced.
Therefore, in all cases, we obtain a binding circle inducing a three-page presentation with at most $3n+1$ arcs as drawn in Figure~\ref{fig:spanning}.

Now assume that the underlying planar graph $\Gamma$ has several connected components.
Then the diagram $D$ is split.
We construct such a binding circle for each component and then combine them by successive connected sums,
which yields a single binding circle satisfying the required properties.
The connected sum operation preserves the adjacency condition in Definition~\ref{def:binding-circle}.
\end{proof}

\begin{figure}[h!]
	\includegraphics{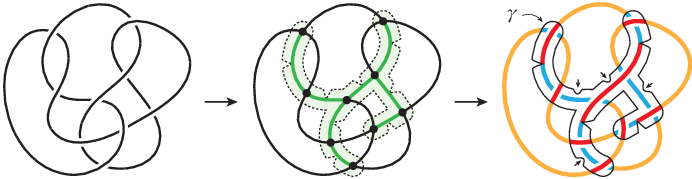}
	\caption{A binding circle associated with a spanning tree of the underlying graph of a link diagram}
	\label{fig:spanning}
\end{figure}

Lemma~\ref{lem:binding-circle-existence} already shows that the three-page index admits a linear upper bound in terms of the crossing number.
However, this bound can be improved by refining the construction.
In the subsequent sections, we describe a method to reduce the coefficient of the linear bound by analyzing the dual graph of the underlying graph associated with a given diagram.

\section{Cell complexes from link diagrams} \label{sec:cell_cpx}

Let $D$ be a non-split reduced link diagram on $S^2$ with $n$ crossings.
The diagram induces a natural cell complex structure on $S^2$.
Let $X(D)$ denote the induced $2$-dimensional cell complex, defined as follows:
\begin{itemize}
\item the $0$-cells correspond to the crossings of $D$,
\item the $1$-cells correspond to the arcs between crossings,
\item the $2$-cells correspond to the complementary regions of $S^2 \setminus D$.
\end{itemize}
Since the underlying graph of $D$ is the $1$-skeleton $X^{(1)}(D)$ of the cell complex $X(D)$, any spanning tree of the underlying graph is a connected and contractible subcomplex of $X^{(1)}(D)$.
The key idea of Lemma~\ref{lem:binding-circle-existence} is that a binding circle can be obtained
as the boundary of a regular neighborhood of a spanning tree of the underlying graph.
We now seek to extend this idea to the level of the cell complex $X(D)$.

For the boundary of a regular neighborhood to be a simple closed curve, the chosen subcomplex must be connected and contractible.
We further require that the chosen subcomplex contain no pair of $2$-cells sharing a common $1$-cell.
After a sufficiently small perturbation, this condition ensures that the boundary of a regular neighborhood of the subcomplex intersects every $1$-cell.
As a result, the resulting simple closed curve functions as a binding circle that distinguishes the three types of arcs in Definition~\ref{def:binding-circle}.
Motivated by these considerations, we introduce the notion of an \emph{extended spanning tree} of $D$, defined as a subcomplex of $X(D)$ satisfying the above conditions.

\begin{definition}
A subcomplex $Y \subset X(D)$ is called an \textbf{extended spanning tree} of a link diagram $D$ if it satisfies the following conditions:
\begin{enumerate}
\item $Y$ is connected and contractible.
\item $Y$ contains all $0$-cells of $X(D)$.
\item No two $2$-cells contained in $Y$ share a common $1$-cell.
\end{enumerate}
\end{definition}

We now aim to improve the result of Lemma~\ref{lem:binding-circle-existence}.

\begin{lemma} \label{lem:extended}
Let $D$ be a non-split reduced link diagram with $n$ crossings, and let $Y$ be an extended spanning tree of $D$.
Then there exists a binding circle $\gamma$ that induces a three-page presentation of $D$ with at most $3n+1-m$ arcs,
where $m$ denotes the number of $2$-cells contained in $Y$.
\end{lemma}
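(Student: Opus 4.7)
The plan is to extend the construction of Lemma~\ref{lem:binding-circle-existence} by taking $\gamma$ to be the boundary of a sufficiently small regular neighborhood $N(Y)\subset S^2$. Since $Y$ is connected and contractible, $N(Y)$ is a topological disk and $\gamma$ is a simple closed curve. All $0$-cells of $X(D)$ lie in $Y\subset N(Y)$, so every crossing of $D$ lies inside $\gamma$, verifying condition~(2) of Definition~\ref{def:binding-circle}.

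The core of the proof is the crossing count. I would partition the $1$-cells of $X(D)$ into three classes: $1$-cells not contained in $Y$; \emph{boundary edges}, those in $Y$ lying on the boundary of some $2$-cell contained in $Y$; and \emph{spur edges}, those in $Y$ that do not lie on the boundary of any $2$-cell in $Y$. A $1$-cell of the first class has both endpoints in $Y$ but its interior in $S^2\setminus N(Y)$, so $\gamma$ crosses it exactly twice. The hypothesis that no two $2$-cells of $Y$ share a common $1$-cell ensures that each boundary edge lies in the interior of $N(Y)$, so $\gamma$ does not cross it. After a small perturbation as in Lemma~\ref{lem:binding-circle-existence}, $\gamma$ crosses each spur edge exactly once.

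The contractibility of $Y$ gives, via the Euler characteristic, $E_Y = n + m - 1$, where $E_Y$ is the number of $1$-cells in $Y$. Letting $s$ denote the number of spur edges (so that $s\le E_Y$), the number of intersection points of $\gamma$ with $D$ is
\[
2(2n - E_Y) + s \;\le\; 2(2n - E_Y) + E_Y \;=\; 4n - E_Y \;=\; 3n + 1 - m,
\]
which equals the number of arcs in the induced three-page presentation.

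The remaining task is to verify conditions (3) and (4) of Definition~\ref{def:binding-circle}. The main obstacle is the adjacency condition~(4): as in the proof of Lemma~\ref{lem:binding-circle-existence}, it holds automatically when $D$ is alternating, and otherwise one performs a local modification of $\gamma$ near an offending edge, an operation that only reduces the intersection count and so preserves the bound~$3n+1-m$.
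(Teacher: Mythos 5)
Your overall strategy---taking $\gamma=\partial\mathcal{N}(Y)$ and counting its intersections with $D$ directly via $\chi(Y)=n-E_Y+m=1$---is a genuinely different, and in principle cleaner, route than the paper's, which starts from the spanning-tree circle $\gamma_0$ of Lemma~\ref{lem:binding-circle-existence} and performs one local modification per $2$-cell, each removing a single intersection point. However, your count has a genuine gap at the boundary edges. You assert that a $1$-cell lying on the boundary of a $2$-cell of $Y$ is crossed zero times by $\gamma$, and you treat this as harmless; but if an edge $e$ joining crossings $c_1$ and $c_2$ is uncut, the arc of $D\setminus\gamma$ containing $e$ passes through both $c_1$ and $c_2$, and condition~(3) of Definition~\ref{def:binding-circle} then forces the strand through $e$ to be the over-strand (or the under-strand) at \emph{both} ends. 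In an alternating diagram this fails for every single edge, since consecutive passages along a strand alternate; so the curve you construct is not a binding circle. Every $1$-cell of $X(D)$, including every boundary edge of a $2$-cell of $Y$, must in general be perturbed to meet $\gamma$ once---this is exactly why the same perturbation is already needed for the tree edges in Lemma~\ref{lem:binding-circle-existence}. It is also the true role of the hypothesis that no two $2$-cells of $Y$ share a $1$-cell: one side of each boundary edge is then a face not in $Y$, so $\gamma$ runs nearby and can be pushed across the edge. Your reading of that hypothesis (that it keeps the edge in the interior of $\mathcal{N}(Y)$) has the logic backwards.

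The good news is that the bound survives the repair: cutting every edge of $Y^{(1)}$ once and every other edge twice gives exactly $2(2n-E_Y)+E_Y=4n-E_Y=3n+1-m$, so your Euler-characteristic computation still delivers the stated estimate (your expression $2(2n-E_Y)+s$ merely undercounts a configuration that is not admissible). But as written, the proof verifies the numerical inequality for an invalid curve, and your closing paragraph, which identifies condition~(4) as the only remaining obstacle, skips precisely the verification of condition~(3) where the argument needs care.
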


\begin{proof}
Let $D$ be a non-split reduced link diagram with $n$ crossings, and let $Y \subset X(D)$ be an extended spanning tree.
Write $m$ for the number of $2$-cells contained in $Y$.
We construct a binding circle $\gamma$ as the boundary of a sufficiently small regular neighborhood of $Y$ in $S^2$.

Since $Y$ is connected and contractible, a regular neighborhood $\mathcal{N}(Y)$ is a topological disk.
Hence its boundary $\partial \mathcal{N}(Y)$ is a simple closed curve.
After a sufficiently small perturbation, we may assume that $\gamma:=\partial \mathcal{N}(Y)$ is transverse to $X^{(1)}(D)$ and meets each $1$-cell in finitely many points.

We now estimate the number of intersection points between $\gamma$ and the diagram $D$.
Each $0$-cell (crossing) of $X(D)$ has valence $4$ in the $1$-skeleton, so there are $4n$ local half-edges in total.
If $\gamma$ is taken as in Lemma~\ref{lem:binding-circle-existence} from a spanning tree, then one obtains $4n-(n-1)=3n+1$ intersections.
We explain how the presence of $2$-cells in $Y$ decreases this count.

Choose a spanning tree $T$ of the $1$-skeleton $Y^{(1)}$.
Because $Y$ is connected and contractible and contains all $0$-cells, we may view $T$ as a spanning tree of the underlying graph of $D$.
Let $\gamma_0$ be the binding circle obtained as the boundary of a sufficiently small regular neighborhood of $T$.
As in Lemma~\ref{lem:binding-circle-existence}, after a perturbation we may assume that $\gamma_0$ intersects every edge of $T$ exactly once and intersects each edge not in $T$ at most once.
In particular, $\gamma_0$ yields a three-page presentation with at most $3n+1$ arcs.

We now modify $\gamma_0$ using the $2$-cells contained in $Y$.
Let $f$ be a $2$-cell of $Y$.
By the definition of an extended spanning tree, no two $2$-cells of $Y$ share a common $1$-cell.
Hence we may perform the following local modification independently for different $2$-cells.

Consider a small neighborhood of $f$.
The boundary $\partial f$ is a cycle in the $1$-skeleton $X^{(1)}(D)$.
Since $f \subset Y$, the regular neighborhood $\mathcal{N}(Y)$ contains a neighborhood of $f$,
so the curve $\gamma=\partial \mathcal{N}(Y)$ runs outside $\partial f$ and, after a small perturbation, intersects exactly once the unique $1$-cell of $\partial f$ that was previously crossed by $\gamma_0$ in order to pass between the two sides of $\partial f$.
Pushing $\gamma_0$ across $f$ replaces this intersection by a segment of $\gamma$ lying in the interior of $f$ and therefore removes exactly one intersection point with the diagram.
Because distinct $2$-cells in $Y$ do not share a $1$-cell, these $m$ local modifications remove $m$ distinct intersection points.

After performing this modification for every $2$-cell in $Y$, we obtain a simple closed curve $\gamma=\partial \mathcal{N}(Y)$ such that
$$
|\gamma \cap D| \le (3n+1) - m.
$$
By construction, $\gamma$ is the boundary of a regular neighborhood of a connected, contractible subcomplex, so it satisfies the first three conditions in Definition~\ref{def:binding-circle}.
Moreover, since $Y$ contains no pair of $2$-cells sharing a common $1$-cell, a sufficiently small perturbation ensures that $\gamma$ meets every $1$-cell of $X(D)$, and the adjacency condition in Definition~\ref{def:binding-circle} follows exactly as in the proof of Lemma~\ref{lem:binding-circle-existence} (alternating case), with the same local avoidance modification in the non-alternating case.

Therefore, $\gamma$ is a binding circle inducing a three-page presentation of $D$ with at most $3n+1-m$ arcs.
\end{proof}

\begin{figure}[h!]
	\includegraphics{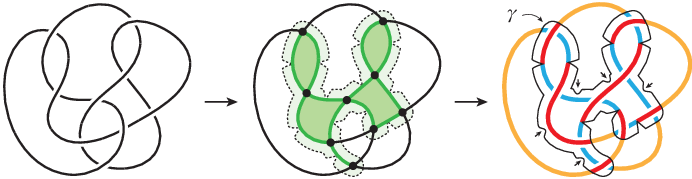}
	\caption{A binding circle associated with an extended 
    spanning tree of the underlying graph of a link diagram}
	\label{fig:extended}
\end{figure}

Lemma~\ref{lem:extended} shows that the number of arcs in a three-page presentation induced by a binding circle
can be reduced by incorporating $2$-cells into the spanning tree.
More precisely, each $2$-cell contained in an extended spanning tree eliminates exactly one intersection
between the binding circle and the diagram.
Thus, improving the linear bound on the three-page index reduces to maximizing
the number of $2$-cells that can be included in an extended spanning tree.
Now we prove the main theorem.

\begin{proof}[Proof of Theorem~\ref{thm:main}]
Let $L$ be a non-split, nontrivial link other than the Hopf link,
and let $D$ be a reduced link diagram of $L$ with $n$ crossings.
Under these assumptions, we have $n\ge 3$.
Let $\Gamma$ denote the underlying $4$-valent plane graph of $D$,
and let $T$ be a spanning tree of $\Gamma$.

Since $n\ge 3$, the tree $T$ contains a path of length two.
Let $e_1$ and $e_2$ denote the two $1$-cells (edges of $D$)
forming this path, sharing a common vertex $c$.
Each $1$-cell of $D$ is incident to exactly two $2$-cells of the induced
cell complex $X(D)$.
Thus, $e_1$ is incident to two $2$-cells, say $f_1$ and $f_2$,
and $e_2$ is incident to two $2$-cells, say $g_1$ and $g_2$.

We claim that among the four $2$-cells $f_1,f_2,g_1,g_2$,
at most one $2$-cell can be common to both sets.
Suppose, to the contrary, that $e_1$ and $e_2$ are incident to the same
two $2$-cells $f_1$ and $f_2$.

Since the graph $\Gamma$ is $4$-valent, there are exactly two further edges
of $\Gamma$ incident to the vertex $c$.
Denote these edges by $e_3$ and $e_4$.
Because $D$ is reduced, the edges $e_1,e_2,e_3,e_4$ are pairwise distinct.

Consider the checkerboard coloring of the complementary regions of $D$.
Along the boundary of any face, the incident edges must alternate
between black and white regions.
Since both $e_1$ and $e_2$ separate the same two faces $f_1$ and $f_2$,
the remaining edges $e_3$ and $e_4$ must also lie entirely on the boundary
of one of these two faces.
Without loss of generality, assume that both $e_3$ and $e_4$ are contained
in the boundary of $f_1$.

Then the face $f_1$ contains all four edges incident to the vertex $c$.
Consequently, removing the vertex $c$ disconnects the graph $\Gamma$,
so $c$ is a cut-vertex.
This contradicts the assumption that $D$ is reduced.

Therefore, there exist two distinct $2$-cells,
one incident to $e_1$ and the other incident to $e_2$,
that do not share a common $1$-cell.
In particular, these two $2$-cells are non-adjacent.

Now start with the spanning tree $T\subset X^{(1)}(D)$
and enlarge it by including these two non-adjacent $2$-cells.
Since the two $2$-cells do not share a common $1$-cell,
the resulting subcomplex is connected and contractible,
and hence forms an extended spanning tree in the sense of
Section~\ref{sec:cell_cpx}.

By Lemma~\ref{lem:extended}, each $2$-cell contained in an extended
spanning tree reduces the number of intersections between the binding
circle and the diagram by one.
In particular, the existence of at least one such $2$-cell implies that
the induced three-page presentation has at most $3n$ arcs,
and the existence of two non-adjacent $2$-cells yields
$$
\alpha_3(L)\le 3n-1 = 3c(L)-1.
$$
This completes the proof.
\end{proof}

In the next section, we do not pursue further improvements of the bound.
Instead, we reinterpret the above construction in graph-theoretic terms,
with the aim of clarifying the underlying combinatorial structure
and formulating natural conjectures suggested by this viewpoint.

\section{Graph-theoretic interpretation and further directions}\label{sec:graph}

In this section we reinterpret the construction of extended spanning trees
from Section~\ref{sec:cell_cpx} in graph-theoretic terms and outline
possible directions for improving the linear bound
$\alpha_3(L)\le 3c(L)$.
Our aim is not to establish new bounds here,
but to clarify the combinatorial structure underlying our method
and to formulate natural conjectures suggested by it.

Let $D$ be a reduced non-split link diagram with $n$ crossings,
and let $\Gamma(D)$ denote the planar dual graph of the underlying
$4$-valent plane graph of $D$.
Then $\Gamma(D)$ is a connected bipartite planar graph with $n+2$ vertices,
each corresponding to a complementary region of the diagram.

As observed in Section~\ref{sec:cell_cpx}, an improvement of the bound
on the three-page index is obtained by incorporating $2$-cells
into an extended spanning tree $Y\subset X(D)$.
From the viewpoint of the dual graph, this procedure admits a natural
graph-theoretic interpretation.
Indeed, $2$-cells of $X(D)$ correspond bijectively to vertices of
$\Gamma(D)$, and the condition that no two $2$-cells in $Y$
share a common $1$-cell translates into the requirement that the
corresponding vertices form an independent set in $\Gamma(D)$.

Furthermore, the requirement that $Y$ be connected and contractible
imposes a global constraint on the corresponding independent sets.
In terms of the dual graph $\Gamma(D)$, this means that removing the chosen
vertices must not disconnect the graph.
Thus, improving the upper bound for the three-page index reduces to the problem of finding large
\emph{non-separating independent sets}
in the planar bipartite graph $\Gamma(D)$.
Connectivity constraints under vertex deletion are classical topics in graph theory;
see, for instance, Diestel~\cite{Diest} for general background.

This reformulation suggests several graph-theoretic strategies
for constructing admissible independent sets in $\Gamma(D)$.
Among the most natural approaches are methods based on
spanning trees with many leaves in planar bipartite graphs.
Spanning trees with many leaves have been studied from both extremal
and algorithmic perspectives; see, for example,
Kleitman and West~\cite{KW}.
If a spanning tree of $\Gamma(D)$ admits a large collection of leaves
contained in a single bipartition class,
any subset of these leaves automatically forms an independent set.
If, moreover, the removal of such leaves preserves connectivity,
the corresponding vertices yield admissible $2$-cells
for an extended spanning tree.

While these strategies are conceptually straightforward,
their rigorous implementation involves subtle global constraints
arising from planarity and connectivity.
A systematic graph-theoretic treatment of these issues
appears to require further technical development
and is left for future work.

The considerations above naturally lead to the following conjectures.

\begin{conjecture}\label{conj:linear_improvement}
There exists a constant $c<3$ such that for every non-split link $L$,
$$
\alpha_3(L)\le c\,c(L)+O(1).
$$
\end{conjecture}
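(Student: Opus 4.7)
The plan is to realize the graph-theoretic strategy outlined at the end of Section~\ref{sec:graph} by producing, in every reduced non-split diagram of $L$, a non-separating independent set in the dual graph whose size grows linearly in the crossing number. By Lemma~\ref{lem:extended}, a non-separating independent set of size $\delta n$ in $\Gamma(D)$ yields an extended spanning tree containing $\delta n$ $2$-cells, and hence a binding circle producing a three-page presentation with at most $(3-\delta)n+1$ arcs. Applied to a minimum crossing diagram, this gives $\alpha_3(L)\le(3-\delta)\,c(L)+1$, which is the conjectured inequality with $c=3-\delta<3$.

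Starting with a minimum crossing diagram $D$, so that $n=c(L)$, I would pass to the planar dual $\Gamma(D)$. Since $D$ is reduced, the underlying $4$-valent graph is $2$-connected, and hence $\Gamma(D)$ is a $2$-connected bipartite quadrangulation of $S^2$ on $n+2$ vertices. The key observation is that a spanning tree $T$ of $\Gamma(D)$ with many leaves automatically produces a non-separating independent set: if $S$ is any subset of leaves of $T$ contained in a single part of the bipartition of $\Gamma(D)$, then $S$ is independent in $\Gamma(D)$ by bipartiteness, and $\Gamma(D)\setminus S$ contains the subtree $T\setminus S$ and is therefore connected. By pigeonhole, any spanning tree with $\ell$ leaves yields such an $S$ with $|S|\ge\ell/2$.

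The task then reduces to producing a spanning tree of $\Gamma(D)$ with $\Omega(n)$ leaves. For graphs of minimum degree at least three, the classical theorem of Kleitman and West~\cite{KW} supplies a spanning tree with at least $n/4+2$ leaves, and sharper constants are known under various planar hypotheses. If $\Gamma(D)$ has minimum degree $\ge 3$, the conjecture follows immediately with $\delta\ge 1/8$. Otherwise, one must deal with the degree-$2$ vertices of $\Gamma(D)$, which correspond to bigon regions of $D$. A natural device is to suppress maximal degree-$2$ paths by contracting them to single edges, apply the extremal leaf theorem to the resulting graph, and then lift the spanning tree back through the suppressed chains while tracking how many leaves survive.

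The hardest part is precisely this last step. Bigon regions can accumulate into twist regions of linear length, so chains of degree-$2$ vertices in $\Gamma(D)$ may be arbitrarily long, and each such chain contributes at most one leaf after contraction; naive bookkeeping therefore loses a potentially large fraction of the desired leaves. A successful proof must either supply a Kleitman--West type bound that remains linear in the presence of arbitrarily many degree-$2$ vertices, or exploit the specific bipartite quadrangulation structure (and the diagrammatic origin of $\Gamma(D)$) to rule out the worst configurations. It is this planarity- and bipartiteness-sensitive leaf count, together with the need to keep the surviving leaves roughly balanced between the two color classes, that I expect to be the main obstacle in carrying the argument through.
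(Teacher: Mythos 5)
This statement is Conjecture~\ref{conj:linear_improvement}, which the paper explicitly leaves open: Section~\ref{sec:graph} states that its aim ``is not to establish new bounds here,'' and that a rigorous treatment ``is left for future work.'' There is therefore no proof in the paper to compare against, and your proposal does not supply one either. What you have written is essentially a faithful elaboration of the strategy the paper itself sketches (pass to the dual graph $\Gamma(D)$, find a spanning tree with many leaves \`a la Kleitman--West, take leaves in one bipartition class to get a non-separating independent set, feed this into Lemma~\ref{lem:extended}), and you correctly identify the point at which that strategy is not yet a proof --- but you then stop at exactly that point. A proposal whose final paragraph says the key step ``is the main obstacle in carrying the argument through'' is a research plan, not a proof.

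To name the gaps concretely. First, the degree-$2$ vertices of $\Gamma(D)$ (bigon faces of $D$) genuinely violate the minimum-degree-$3$ hypothesis of Kleitman--West, and a reduced (even minimal) diagram can have $\Omega(n)$ bigons concentrated in twist regions; your suppress-and-lift scheme, as described, can lose a constant fraction of the leaves, so you have not established any $\delta>0$. Second, even granting a non-separating independent set $S$ of size $\delta n$, the passage back to an \emph{extended spanning tree} is not automatic: taking $Y=T\cup\bigcup_{f\in S}\bar f$ for a spanning tree $T$ of the underlying graph, a face $f$ whose closed boundary meets $T$ in $k\ge 2$ components contributes $\chi(\bar f\cap T)=k$ and hence nontrivial $H_1$ to $Y$, destroying contractibility; one must choose $T$ compatibly with $S$ (or argue this is always possible), and neither you nor the paper does so. Until both of these are resolved, the conjecture remains open and your argument proves nothing beyond what Lemma~\ref{lem:extended} and Theorem~\ref{thm:main} already give.
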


Equivalently, the problem may be formulated purely in terms of planar graphs.

\begin{conjecture}\label{conj:nonsep_independent}
There exists a constant $\delta>0$ such that for any reduced link diagram
$D$ with $n$ crossings, the dual graph $\Gamma(D)$ contains
a non-separating independent set of size at least $\delta n$.
\end{conjecture}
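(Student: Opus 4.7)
The plan is to attack the conjecture via spanning trees with many leaves contained in a single bipartition class of $\Gamma(D)$. Two observations drive the reduction. First, if $T$ is any spanning tree of $\Gamma(D)$ and $S$ is any set of leaves of $T$, then $T\setminus S$ is still a subtree spanning $V(\Gamma(D))\setminus S$, so $\Gamma(D)\setminus S$ is connected and hence every such $S$ is non-separating. Second, since $\Gamma(D)$ is bipartite with parts $A$ and $B$, any set of vertices drawn from one part is automatically independent in $\Gamma(D)$, not merely in $T$. Together these reduce the conjecture to producing a spanning tree of $\Gamma(D)$ whose leaves include $\Omega(n)$ vertices of a single bipartition class.

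To produce such a tree I would invoke the Kleitman--West theorem~\cite{KW}: every connected graph of minimum degree at least three on $N$ vertices has a spanning tree with at least $N/4+2$ leaves. The dual graph $\Gamma(D)$ has $N=n+2$ vertices and $2n$ edges, so its average degree is close to four. If $\Gamma(D)$ already has minimum degree at least three, Kleitman--West at once gives a spanning tree with $\Omega(n)$ leaves, and a pigeonhole step over $A$ and $B$ places at least $\tfrac{1}{8}n+O(1)$ of them in a single part, giving $\delta=1/8$ in this case.

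The obstacle is that $\Gamma(D)$ may fail to have minimum degree three: each bigonal face of $D$ produces a degree-two vertex, while reducedness only rules out monogons and hence degree-one vertices. My plan is to suppress maximal chains of degree-two vertices in $\Gamma(D)$, obtaining an auxiliary planar multigraph $\Gamma'$ of minimum degree at least three, apply Kleitman--West to $\Gamma'$, and then pull the spanning tree back to $\Gamma(D)$. The suppressed bigonal vertices can often be reinserted as additional leaves of the pulled-back tree, provided their two neighbors remain connected after their removal; because a bigon vertex lies in the opposite part from its two neighbors, these reinsertions never damage independence within the chosen class.

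The hard part will be the case of bigon-heavy diagrams, where a positive fraction of the crossings lies in bigonal regions. Suppression can then collapse $\Gamma(D)$ to a graph $\Gamma'$ with sublinearly many vertices, so that the Kleitman--West output alone is no longer $\Omega(n)$. Compensating for this requires harvesting independent leaves directly from the bigonal chains by choosing alternating vertices along each chain and verifying non-separation through local planarity arguments. A tight simultaneous accounting of the Kleitman--West contribution on $\Gamma'$ and the bigonal contribution on $\Gamma(D)$, with both contributions constrained to the same bipartition class, is the technical step I expect to be the main obstacle and that makes the conjecture genuinely open at present.
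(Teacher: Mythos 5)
The statement you are asked to prove is labeled a \emph{conjecture} in the paper: the author explicitly writes that a rigorous treatment ``appears to require further technical development and is left for future work,'' so there is no proof in the paper to compare against. Your proposal is, in essence, a fleshed-out version of the exact strategy the paper itself sketches in Section~\ref{sec:graph} --- spanning trees with many leaves in the sense of Kleitman and West, restricted to one bipartition class, with leaf-removal guaranteeing non-separation. The two correct observations you isolate (leaves of a spanning tree of $\Gamma(D)$ form a non-separating set because the tree minus its leaves still spans the remaining vertices, and any subset of one bipartition class is automatically independent) are sound and are precisely what the paper has in mind.

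However, the proposal is not a proof, and you candidly say so. The gap you identify is genuine and is the same one that keeps the conjecture open: the Kleitman--West bound of $N/4+2$ leaves requires minimum degree at least three, while the dual graph $\Gamma(D)$ of a reduced diagram routinely has degree-two vertices coming from bigonal regions (the standard trefoil diagram already has three of its five faces as bigons). Your suppression-and-reinsertion scheme is a plausible repair, but the bigon-heavy regime --- where suppression collapses $\Gamma(D)$ to a sublinear-size graph and the leaf count of the auxiliary tree no longer gives $\Omega(n)$ --- is not handled; harvesting alternating vertices along bigon chains while keeping all selected vertices in a \emph{single} bipartition class and simultaneously preserving global connectivity after deletion is exactly the ``subtle global constraint arising from planarity and connectivity'' that the paper flags. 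So the verdict is: same approach as the paper's informal outline, correctly diagnosed obstruction, no proof --- which is consistent with the statement being an open conjecture rather than a theorem.
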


A positive resolution of Conjecture~\ref{conj:nonsep_independent}
would immediately yield an improved linear upper bound for the
three-page index via the extended spanning tree construction
of Section~\ref{sec:cell_cpx}.
We expect that this graph-theoretic perspective provides a useful viewpoint
for further investigations at the interface of
link theory and planar graph theory.

\section*{Acknowledgements}
This study was supported by Basic Science Research Program of the National Research Foundation of Korea (NRF) grant funded by the Korea government Ministry of Education (RS-2023-00244488).

\bibliography{threepage} 
\bibliographystyle{abbrv}
	
\end{document}